\numberwithin{equation}{section}
\newtheorem{thm}{Theorem}[section]
\newtheorem{lem}[thm]{Lemma}
\newtheorem{prop}[thm]{Proposition}
\newtheorem{defn}[thm]{Definition}
\newtheorem{prob}[thm]{Problem}
\theoremstyle{definition}
\newcounter{alphabet}
\newcounter{tmp}
\newcommand{\vertiii}[1]{{\left\vert\kern-0.25ex\left\vert\kern-0.25ex\left\vert #1 
    \right\vert\kern-0.25ex\right\vert\kern-0.25ex\right\vert}}
\newcommand{\bysame}{\leavevmode\hbox to3em{\hrulefill}\,}
\begin{document}
\baselineskip=21pt
\markboth{} {}

\bibliographystyle{amsplain}
\title[On a problem by Hans Feichtinger]
{On a problem by Hans Feichtinger}

\author{Radu Balan}
\author{Kasso A.~Okoudjou}
\author{Anirudha Poria} 

\address{Department of Mathematics, University of Maryland, College Park, MD 20742, USA}
\email{rvbalan@math.umd.edu}
\address{Department of Mathematics, University of Maryland, College Park, MD 20742, USA}
\email{kasso@math.umd.edu}
\address{Department of Mathematics, Indian Institute of Technology Guwahati, Assam 781039, India}
\email{a.poria@iitg.ernet.in}
\keywords{Modulation spaces, pseudodifferential operators,  time-frequency analysis, trace-class operators, Wilson bases.} 
\subjclass[2010]{Primary 45P05, 47B10; Secondary 42C15.}

\begin{abstract} 
In this paper, we solve a spectral problem about positive semi-definite trace-class pseudodifferential operators on modulation spaces which was posed by  H.~Feichtinger. Later, C.~Heil and D.~Larson rephrased the problem in the broader setting of positive semi-definite trace-class operators on a separable Hilbert space. Our solution consists in constructing a counterexample that solves Hans Feichtinger's problem by first solving this second problem.
\end{abstract}
\date{\today}
\maketitle

\def\BC{{\mathbb C}} \def\BQ{{\mathbb Q}}
\def\BR{{\mathbb R}} \def\BI{{\mathbb I}}
\def\BZ{{\mathbb Z}} \def\BD{{\mathbb D}}
\def\BP{{\mathbb P}} \def\BB{{\mathbb B}}
\def\BS{{\mathbb S}} \def\BH{{\mathbb H}}
\def\BE{{\mathbb E}}
\def\BN{{\mathbb N}}

\vspace{-.5cm}

\section{Introduction}

In this paper we answer the following  question posed by Feichtinger at an Oberwolfach mini-workshop on wavelets \cite{FeiOber}. 

\begin{prob}\label{fei-problem} Let $T$ be a positive semi-definite trace class operator on $L^2(\BR)$ given by 
$$Tf(x)=\int_{\BR}k(x,y)f(y)dy,$$ 
where $f\in L^2(\BR)$  and $k\in M^{1}(\BR^2)$, the so-called Feichtinger algebra. Suppose that 
$$T=\sum_{k=1}^\infty h_k\otimes \overline{h_k},$$ 
where  $\{h_k\}_{k=1}^{\infty}  \subset L^2(\BR)$ is a set of orthogonal eigenfunctions of $T$ corresponding to the eigenvalues $\{\|h_k\|_2^2\}_{k=1}^{\infty}$, such that $\|h_k\|_{M^1(\BR)}<\infty$, and the bar denotes the complex conjugation. 
In particular, $\text{Trace}(T)=\sum_{k=1}^{\infty}\|h_k\|_2^2<\infty.$ 

Must we have: $\sum_{k=1}^{\infty}\|h_k\|_{M^{1}(\BR)}^2<\infty?$
\end{prob}

Heil and Larson later put the problem in the broader setting  of positive semi-definite trace-class operators on a separable Hilbert space $\BH$  \cite{hei08}. To state this generalization we first set some notations. Let $\BH$ be a separable Hilbert space and choose  an orthonormal basis $\{w_n\}_{n  \geq 1}$ for $\BH$. We  define a subspace $\BH^1$ of $\BH$ by 
\begin{equation}\label{eq01}
\BH^1=\Big\{ f \in \BH : \vertiii{f}:=\sum_{n=1}^{\infty} |\langle f, w_n\rangle| < \infty \Big\}.
\end{equation}
It follows that $\vertiii{w_n}=\Vert w_n \Vert=1$ for every $n$, and that if  $f \in \BH^1$ then $f=\sum_{n=1}^{\infty}\langle f,w_n \rangle w_n$, with convergence of this series in $both$ norms $\Vert \cdot \Vert$ and $\vertiii{\cdot}$.  

We define an operator  $T:\BH \rightarrow \BH$ by 
\begin{equation}\label{eq02}
T=\sum_{m=1}^\infty \sum_{n=1}^\infty c_{mn} (w_m \otimes \overline{w_n}),
\end{equation}
where the  scalars $c_{mn}$ are such that \[ \sum_{m=1}^\infty \sum_{n=1}^\infty |c_{mn}|< \infty\]
and the tensor product $w_m\otimes \overline{w_n}$ maps linearly $\BH$ to $\BH$ via
\[ f\in \BH ~\mapsto ~ w_m\otimes \overline{w_n} (f) = \langle f, w_n\rangle w_m. \]
It is easy to see that $T \in  \mathcal{I}_1$, the space of all trace-class operators, with  

\[\Vert T \Vert_{\mathcal{I}_1} \leq \sum_{m=1}^\infty \sum_{n=1}^\infty \Vert c_{mn} (w_m \otimes 
\overline{w_n})\Vert_{\mathcal{I}_1}=\sum_{m=1}^\infty \sum_{n=1}^\infty | c_{mn}|< \infty. \]
In addition, note that  the series defining $T$ converges not only in the strong operator topology and operator norm, but also in trace-class norm.

 Now suppose that the operator  $T$ given by~\eqref{eq02} is positive semi-definite.  Let $\{h_n\}_{n \geq 1}$ be an orthonormal basis of eigenvectors of $T$ and $\{ \lambda_n \}_{ n \geq 1} \subset [0, \infty)$ be the corresponding eigenvalues. It follows that 
\begin{equation}\label{eq03}
T=\sum_{n=1}^{\infty} \lambda_n (h_n \otimes \overline{h_n})=\sum_{n=1}^\infty g_n \otimes 
\overline{g_n},
\end{equation}
where $g_n=\lambda_n^{1/2} h_n$. In addition, \[\Vert T \Vert_{\mathcal{I}_1}=\sum_{n=1}^{\infty} \lambda_n=\sum_{n=1}^{\infty} \lambda_n \Vert h_n \Vert^2 < \infty.\]
Heil and Larson's generalization of Problem~\ref{fei-problem} is the following question \cite{hei08}.

\begin{prob}\label{problem1}
With the above notations, must we have 
\begin{equation}\label{eq04}
\sum_{n=1}^{\infty} \lambda_n \vertiii{h_n}^2 < \infty ?
\end{equation}
\end{prob}

In Section~\ref{sec3} we show that the solution to each of these problems is negative by providing counterexamples for each of them. But first, we provide some necessary background in Section~\ref{sec2}

\section{Preliminaries}\label{sec2} In this section we recall the definition of the modulation spaces and some of their properties. In the second half of the section, we introduce two classes of trace-class operators that capture the behaviors of the operators in Problems~\ref{fei-problem} and~\ref{problem1}.

\subsection{Modulation spaces}\label{subsec1.1}
Let  $g\in \mathcal{S}(\BR)$ be a function in the Schwartz space of smooth and rapidly decaying functions, e.g., $g(x)=e^{-\pi x^2}$, and let $1\leq p \leq \infty$. We say that a tempered distribution $f$ is in the modulation space $M^{p}(\BR)$ if and only if 
$$\|f\|_{M^{p}}^p:=\iint_{\BR^2}|V_gf(x, \omega)|^pdxd\omega< \infty,$$ 
with the usual modification for $p=\infty$, where 
$$ V_g f (x,\omega)=\int_{\BR} f(t) \overline{g(t-x)} e^{-2 \pi i \omega t} dt$$ is the $short$-$time$ $Fourier$ $transform$ (STFT) of a function $f $ with respect to $g$.  A simple application of the Plancherel formula  shows that if $f\in L^2(\BR)$ then $$\|V_gf\|_{L^{2}(\BR^2)}^2=\iint_{\BR^2} |V_gf(x, \omega)|^2dxd\omega =\|g\|_2^2\|f\|_2^2.$$ Consequently, $V_g$ is a multiple of an isometry from $L^2(\BR)$ into $L^2(\BR^2)$ and $M^2(\BR)=L^2(\BR), $ \cite{gro01}.  The other modulation space that will be of interest in the sequel is $M^1(\BR)$, which is also known as the Feichtinger algebra \cite{fei03, gro01}. In particular, we note that \[ \mathcal{S}(\BR) \subset M^1(\BR) \subset M^2(\BR)=L^2(\BR) \subset M^\infty(\BR) \subset \mathcal{S'}(\BR). \]

We also need a discrete characterization of $L^2$ and $M^1$. Such a characterization exists for all the modulation spaces in terms of the so-called Wilson basis, see \cite{dau91, fei92, wil87}. In particular, it is known that there exists an orthonormal basis $\mathcal{W}:=\{w_n\}_{n \geq 1}$ for $L^2(\BR)$ where for each $n\geq 1$, $w_n \in M^1(\BR)$. In addition, for $1\leq p \leq \infty$ and for all $f \in M^p$, 
\[ f=\sum_{n\geq1}  \langle f, w_{n} \rangle w_{n},  \]
where the series converges unconditionally in the norm of $M^p$ if $1\leq p<\infty$, and is weak$^*$ convergent if $p =\infty$. Moreover, 
\begin{equation*}
\Vert f \Vert_{M^p}=\bigg( \sum_{n\geq 1} |\langle f, w_{n} \rangle|^p \bigg)^{1/p}
\end{equation*}
is an equivalent norm for $M^p$; we refer to \cite[Theorem 8.5.1]{gro01} for details. In the sequel,  we shall  only be interested in $p=1,$ and $p=2$. In the latter case, $\{w_n\}_{n\geq 1}$ is an orthonormal basis for $L^2(\BR).$

It is trivial to extend these characterizations to modulation spaces defined on $\BR^d$. In particular, one defines a Wilson orthonormal basis for  $L^2(\BR^2)$ by taking the  tensor product of $1$-dimensional Wilson ONBs. For example, $\{W_{n, m}: n, m \geq 1\} \subset L^{2}(\BR^2)$ is given by  
$$W_{n, m}(x, y):=w_n\otimes \overline{w_m} (x, y) =w_{n}(x)\overline{w_m(y)}, \quad n,m \geq 1,$$
and it acts by
$$W_{n, m}(f)=\langle f,w_m \rangle w_n= \bigg( \int_{\BR} f(y) \overline{w_m(y)} dy \bigg) w_n.$$ 
 In addition,  $\{W_{n, m}: n, m \geq 1\}$ is an unconditional basis for $M^1(\BR^2)$. 

 Let $T: L^2(\BR)\to L^2(\BR)$ be a compact integral operator associated with the kernel $k\in M^1(\BR^2)\subset L^2(\BR^2)\cap L^1(\BR^2)$ and defined by $$Tf(x)=\int_{\BR}k(x,y)\, f(y)dy.$$ Then, $T$ is a trace-class operator \cite{hei08}, and

 \begin{equation}\label{eq3}
k=\sum_{m,n \geq 1} \langle k,  W_{m,n} \rangle  W_{m,n},
\end{equation}
with convergence of the series in the $M^1$-norm. In addition, 
\begin{equation}\label{eq4}
\Vert k \Vert_{M^1}=\sum_{m,n \geq 1} |\langle k,  W_{mn} \rangle| < \infty.
\end{equation}
 It now follows that for $f\in L^2(\BR)$,
\begin{eqnarray*}
Tf =  \sum_{m,n \geq 1} \langle k,  W_{mn} \rangle (w_m \otimes \overline{w_n}) (f)
 =\sum_{m,n \geq 1} \langle k,  W_{mn} \rangle (W_{m,n}) (f).
\end{eqnarray*}
The discrete version of the integral operator $T$ is given by the matrix $K=(\langle k,  W_{m,n} \rangle)_{m, n \geq 1}$, or equivalently 
\begin{equation}\label{eq6}
T=\sum_{m,n \geq 1} \langle k,  W_{m, n} \rangle W_{m,n}. 
\end{equation}
Suppose in addition that $T$ is positive semi-definite. Then, by the spectral theorem, 
$$T=\sum_{k=1}^{\infty}\lambda_k t_k\otimes \overline{t_k}=
\sum_{k=1}^\infty h_k\otimes \overline{h_k},$$ 
where $\{\lambda_k\}_{k=1}^{\infty}\subset (0, \infty)$ is the set of eigenvalues of $T$ and $\{t_k\}_{k=1}^\infty$ is an orthonormal basis of corresponding eigenfunctions, and  $h_k=\sqrt{\lambda_k}t_k$ for each $k\geq 1$. It was proved in \cite{CorFeiLue08, hei08} that $h_k \in M^1(\BR)$.

 \subsection{Type $A$ and type $B$ operators}\label{subsec1.2}
 
 Let  $\BH$  denote an infinite-dimensional separable Hilbert space, with norm $\Vert \cdot \Vert$ and inner product $\langle \cdot , \cdot \rangle$.
Let $\mathcal{I}_1 \subset \mathcal{B}(\BH)$  be the subspace of trace-class operators. A positive semi-definite operator $T$ belongs to $ \mathcal{I}_1$ if and only if 
$$\|T\|_{\mathcal{I}_1}=\sum_{n=1}^\infty \lambda_n(T)<\infty, $$ 
where $\{\lambda_n(T)\}_{n\geq 1}$ is the set of eigenvalues of $T$ arranged  in a decreasing order and repeated according to multiplicity. For a detailed study on trace-class operators see \cite{dun88, sim79}.

We fix now an orthonormal basis $\{w_n\}_{n \geq 1}$ for $\BH$, once and for all. This basis induces the norm $\vertiii{\cdot}$ on the dense subset $\BH^1$ introduced in (\ref{eq01}), and repeated here for the convenience of the reader: 
\[ \vertiii{f}=\sum_{n=1}^{\infty} |\langle f, w_n\rangle|, \quad \BH^1=\Big\{ f \in \BH : \sum_{n=1}^{\infty} |\langle f, w_n\rangle| < \infty \Big\}.\]

\begin{defn}\label{def-types}
An operator $T$ given by~\eqref{eq02} is of \emph{Type $A$} with respect to the orthonormal basis $\{w_n\}_{n \geq 1}$ if, for an orthogonal set of eigenvectors $\{g_n\}_{n \geq 1}$ of $T$ such that $T=\sum_{n =1}^\infty g_n \otimes \overline{g_n}$, with convergence in the strong operator topology, we have that 
\begin{equation*}
\sum_{n=1}^{\infty}  \vertiii{g_n}^2 < \infty.
\end{equation*} 
\end{defn}

\begin{defn}\label{def-types}
An operator $T$ given by~\eqref{eq02} is of \emph{Type $B$} with respect to the orthonormal basis $\{w_n\}_{n \geq 1}$  if there is some sequence of vectors $\{v_n \}_{n \geq 1}$ in $\BH$ such that $T=\sum_{n=1}^\infty v_n \otimes 
\overline{v_n}$ with convergence in the strong operator topology and we have that
\[ \sum_{n=1}^{\infty}  \vertiii{v_n}^2 < \infty. \] 
\end{defn}

It is clear that if $T$ is of Type $A$ then it is of Type $B$. However, it was shown in \cite[Example 2.2]{hei08}  that  not every positive trace-class operator is of Type $A$ or Type $B$, even when the operator is finite-rank.

Problem~\ref{problem1} can now be reformulated as follows. 
\begin{prob}\label{problem2}
If $T$ is of Type $B$ with respect to an orthonormal basis $\{w_n\}_{n \geq 1}$ , must
it be of Type $A$ with respect to the same ONB $\{w_n\}_{n \geq 1}$ $?$
\end{prob}

 \section{Main results}\label{sec3}

We answer negatively Problems~\ref{problem1} and~\ref{problem2} by constructing a counterexample for the complex Hilbert space $\BH$, in Proposition~\ref{proposition1}. This example is then modified to generate an example when the  Hilbert space $\BH$ is over the real field, in Proposition~\ref{proposition2}. From there, we answer the Feichtinger original problem in Theorem~\ref{mainresult}.

\begin{prop}\label{proposition1} Let  $\BH=\ell^2(\{1,2,...\})$, and choose $p > 1$. Let $\{w_\ell\}_{\ell=1}^{\infty}$ denote  the standard orthonormal basis of $\BH$, i.e., $w_{\ell}=\delta_{\ell}$. Then $\BH^1=\ell^1(\{1,2,...\})$. For each $n\geq 1$, let $\{e_{n,k}\}_{k=0}^{n-1}$ be the Fourier ONB of $\BC^n$ defined by \[ e_{n,k}=\frac{1}{\sqrt{n}} \left( e^{- \frac{2 \pi i k \ell}{n}} \right)_{\ell=0}^{n-1} =\frac{1}{\sqrt{n}} \left( 1, e^{- \frac{2 \pi i k}{n}}, e^{- \frac{4 \pi i k}{n}},..., e^{- \frac{2 \pi i k (n-1)}{n}} \right)^T, \]  
and consider the $n\times n$ matrix $T_n$ given by 
 \[ T_n=\sum_{k=0}^{n-1} \lambda_{n,k}(e_{n,k} \otimes \overline{e_{n,k}} )= \frac{1}{n^3} \sum_{k=0}^{n-1} \left(1+\frac{k}{n^p}\right) (e_{n,k} \otimes \overline{e_{n,k}} ) \in \BC^{n \times n}, \]   
where  $\lambda_{n,k}=\frac{1}{n^3} \left( 1+\frac{k}{n^p} \right)$.
We define an  infinite block-diagonal matrix $T$ by   $$T=T_1 \oplus T_2 \oplus ... \oplus T_n \oplus...$$ 
Then, $T$ is a positive semi-definite trace-class operator of Type $B$ but not of Type $A$ with respect to the orthonormal basis $\{w_\ell\}$.
\end{prop}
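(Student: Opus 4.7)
The plan is to use the block-diagonal structure of $T$ to verify separately that $T$ is a positive semi-definite trace-class operator of the form~\eqref{eq02}, is of Type $B$, but is not of Type $A$ with respect to $\{w_\ell\}$. Each $T_n$ is a nonnegative combination of the orthogonal rank-one projections $e_{n,k}\otimes\overline{e_{n,k}}$, so $T_n\ge 0$ with $\mathrm{tr}(T_n)=\frac{1}{n^2}+\frac{n-1}{2n^{p+2}}$; summing in $n$ gives $T\in\mathcal{I}_1$, and a routine geometric-sum estimate of the matrix entries (using $e_{n,k}(m)\overline{e_{n,k}(m')}=\frac{1}{n}e^{-2\pi ik(m-m')/n}$) verifies the absolute summability $\sum|c_{mn}|<\infty$ required by \eqref{eq02}. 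To obtain Type $B$, I would split
\[ T_n=\frac{1}{n^3}I_n+\frac{1}{n^{p+3}}\sum_{k=0}^{n-1}k\,(e_{n,k}\otimes\overline{e_{n,k}}), \]
using $\sum_k e_{n,k}\otimes\overline{e_{n,k}}=I_n$, and re-expand $I_n=\sum_{\ell=0}^{n-1}\delta_\ell\otimes\overline{\delta_\ell}$ in the standard basis of the $n$-th block, producing two families of summand-vectors $n^{-3/2}\delta_\ell^{(n)}$ and $\sqrt{k/n^{p+3}}\,e_{n,k}^{(n)}$ (the superscript denoting embedding into the $n$-th block of $\BH$). Since $\vertiii{\delta_\ell^{(n)}}=1$ and $\vertiii{e_{n,k}^{(n)}}=\sqrt n$ (because $|e_{n,k}(\ell)|=n^{-1/2}$ on $n$ coordinates), the two contributions to $\sum\vertiii{v}^2$ are $\sum_n n\cdot n^{-3}=\sum_n n^{-2}$ and $\sum_n\sum_k k/n^{p+2}\sim\sum_n(n-1)/(2n^{p+1})$, both finite thanks to $p>1$.

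For the non-Type-$A$ direction, note that within each block the eigenvalues $\lambda_{n,k}$ are pairwise distinct, so every eigenvector of $T_n$ is a scalar multiple of some $e_{n,k}$. The canonical orthogonal eigenvector decomposition of $T$ therefore has $g_{n,k}=\sqrt{\lambda_{n,k}}\,e_{n,k}^{(n)}$, and
\[ \sum_{n,k}\vertiii{g_{n,k}}^2=\sum_n\sum_{k=0}^{n-1}\lambda_{n,k}\cdot n=\sum_n\Bigl(\frac{1}{n}+\frac{n-1}{2n^{p+1}}\Bigr)=\infty, \]
the divergence being driven by the harmonic tail.

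The most delicate step I expect is handling possible across-block eigenvalue coincidences $\lambda_{n,k}=\lambda_{n',k'}$ with $n\ne n'$: such coincidences produce higher-dimensional eigenspaces, and a unitary mixing across blocks might, a priori, reduce $\sum\vertiii{g_j}^2$ below the canonical value. The key observation is that $e_{n,k}^{(n)}$ and $e_{n',k'}^{(n')}$ have disjoint supports in the basis $\{w_\ell\}$, so for any unitary matrix $(u_{ij})$ acting on a family $\{e_{n_j,k_j}^{(n_j)}\}$ of eigenvectors sharing a common eigenvalue,
\[ \sum_i\vertiii{\textstyle\sum_j u_{ij}e_{n_j,k_j}^{(n_j)}}^2=\sum_i\Bigl(\sum_j|u_{ij}|\,\vertiii{e_{n_j,k_j}^{(n_j)}}\Bigr)^2\ge\sum_j\vertiii{e_{n_j,k_j}^{(n_j)}}^2, \]
the last inequality following from $\sum_i|u_{ij}|^2=1$ (column-orthonormality of $U$) together with the nonnegativity of the cross terms. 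Thus the unmixed canonical decomposition minimizes the Type $A$ sum, which is already $+\infty$, so $T$ cannot be of Type $A$.
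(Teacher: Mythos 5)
Your proposal is correct, and for the trace-class/positivity and Type $B$ parts it is essentially the paper's argument: the same splitting $T_n=\tfrac{1}{n^3}I_n+\tfrac{1}{n^{3+p}}\sum_k k\,(e_{n,k}\otimes\overline{e_{n,k}})$, the same re-expansion of $I_n$ in the standard basis, and the same convergent sums $\sum_n n^{-2}$ and $\sum_n (n-1)/(2n^{p+1})$. The genuine difference is in the non-Type-$A$ step. The paper simply asserts that $T$ has only one-dimensional eigenspaces, so that $\sum_{n,k}\lambda_{n,k}(e_{n,k}\otimes\overline{e_{n,k}})$ is the \emph{unique} decomposition into rank-one terms built from orthogonal eigenvectors, and then computes the divergent sum $\sum_{n,k}\lambda_{n,k}\vertiii{e_{n,k}}^2\geq\sum_n 1/n$. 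Within a block simplicity is clear, but across blocks the eigenvalue ranges $[n^{-3},2n^{-3})$ do overlap for $n\geq 4$, so coincidences $\lambda_{n,k}=\lambda_{n',k'}$ are not obviously excluded for every $p>1$; the paper leaves this unaddressed. Your extra lemma closes exactly this point: any admissible decomposition is obtained by a unitary change of orthonormal basis inside each (finite-multiplicity) eigenspace, the contributing eigenvectors from distinct blocks have disjoint supports in $\{w_\ell\}$, so $\vertiii{\sum_j u_{ij}e_j}=\sum_j|u_{ij}|\,\vertiii{e_j}$, and dropping cross terms plus column normalization $\sum_i|u_{ij}|^2=1$ gives $\sum_i\vertiii{\sum_j u_{ij}e_j}^2\geq\sum_j\vertiii{e_j}^2$; hence every eigen-decomposition majorizes the canonical one, which already diverges. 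This makes your argument insensitive to the arithmetic of $p$ and in fact repairs an unsupported assertion in the paper. (Your verification of $\sum_{m,n}|c_{mn}|<\infty$ is also a welcome extra; note it follows even more cheaply from the Type $B$ decomposition itself, since $\sum_{m,n}|c_{mn}|\leq\sum_j\vertiii{v_j}^2$.)
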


\begin{proof}

By construction, the blocks $T_n$ that make up $T$ are pairwise orthogonal. Furthermore, for each $n\geq 1$, the spectrum of $T_n$ consists of simple eigenvalues $\lambda_{n, k}$ with corresponding eigenvectors $e_{n, k}$ for $k=0, \hdots, n-1$. Consequently, for each $n\geq 1$, and each $k \in \{0, \hdots, n-1\}$, $e_{n,k}$ generates  a one-dimensional eigenspace of  $T$ corresponding to the eigenvalue $\lambda_{n,k}$. It is clear that $T$ is positive semi-definite. Since $\Vert e_{n,k} \Vert_2=1$ and $T=\bigoplus_{n=1}^{\infty} \sum_{k=0}^{n-1} \lambda_{n,k}(e_{n,k} \otimes \overline{e_{n,k}})$, we see that 

\begin{align*}
\|T\|_{\text{op}}&\leq \sum_{n = 1}^\infty \sum_{k=0}^{n-1}\frac{1}{n^3}  \left(1+\frac{k}{n^p}\right) \|e_{n,k} \otimes \overline{e_{n,k}}\|_{\text{op}}\\
&=\sum_{n = 1}^\infty \sum_{k=0}^{n-1}\frac{1}{n^3}  \left(1+\frac{k}{n^p}\right) \|e_{n,k} \|\\
&= \sum_{n = 1}^\infty \sum_{k=0}^{n-1}\frac{1}{n^3}  \left(1+\frac{k}{n^p}\right) <\infty.
\end{align*}
Furthermore, since $p>1$, we see that 
\begin{eqnarray*}
\Vert T  \Vert_{\mathcal{I}_1}= \mathrm{trace}(T) &=& \sum_{n = 1}^\infty \sum_{k=0}^{n-1}\frac{1}{n^3}  \left(1+\frac{k}{n^p}\right) \\
&=& \sum_{n = 1}^\infty \frac{1}{n^3} \left( n+ \frac{n(n-1)}{2n^p} \right)\\
&<& \infty.
\end{eqnarray*}
Hence $T$  is a well-defined  trace-class operator on $\BH$. 

We now show that $T$ is of Type $B$. To this end we observe that for each $n\geq 1$, $\sum_{k=0}^{n-1} e_{n,k} \otimes \overline{e_{n,k}}=I_n$, where $I_n$ denotes the identity of order $n$. Then  
\begin{eqnarray*}
 T_n &=& \frac{1}{n^3} \sum_{k=0}^{n-1} \left(1+\frac{k}{n^p}\right) (e_{n,k} \otimes \overline{e_{n,k}} ) \\
&=&  \frac{1}{n^3} \sum_{k=0}^{n-1} (e_{n,k} \otimes \overline{e_{n,k}}) +\frac{1}{n^{3+p}} \sum_{k=0}^{n-1} k (e_{n,k} \otimes \overline{e_{n,k}}) \\
&=&  \frac{1}{n^3} I_n +\frac{1}{n^{3+p}} \sum_{k=0}^{n-1} k (e_{n,k} \otimes \overline{e_{n,k}}).
\end{eqnarray*}
Thus  $T$ can be written as 
\begin{eqnarray*}
T &=& \bigoplus_{n \geq 1} T_n=\bigoplus_{n \geq 1} \left( \frac{1}{n^3} I_n +\frac{1}{n^{3+p}} \sum_{k=0}^{n-1} k (e_{n,k} \otimes \overline{e_{n,k}}) \right) \\
&=& \bigoplus_{n \geq 1} \left( \frac{1}{n^3} I_n  \right) + \bigoplus_{n \geq 1}  \frac{1}{n^{3+p}} \sum_{k=0}^{n-1} k (e_{n,k} \otimes \overline{e_{n,k}}) \\
&=& \bigoplus_{n \geq 1} \frac{1}{n^3} \sum_{k=1}^n (w_{\frac{n(n-1)}{2}+k} \otimes \overline{w_{\frac{n(n-1)}{2}+k}}) + \bigoplus_{n \geq 1}  \frac{1}{n^{3+p}} \sum_{k=0}^{n-1} k (e_{n,k} \otimes \overline{e_{n,k}}).
\end{eqnarray*}
Then we have
\[\vertiii{w_{\frac{n(n-1)}{2}+k}}=1, \quad \vertiii{e_{n,k}}=\sqrt{n},\]
and
\begin{eqnarray*}
&& \sum_{n \geq 1} \frac{1}{n^3} \cdot \sum_{k=1}^n 1^2+ \sum_{n \geq 1} \frac{1}{n^{3+p}} \sum_{k=0}^{n-1} k \cdot (\sqrt{n})^2 \\
&=& \sum_{n \geq 1} \left( \frac{1}{n^2} + \frac{n-1}{2n^{1+p}} \right) < \infty, \quad \mathrm{for \; any \;} p>1.
\end{eqnarray*}
Hence, $T$ is of Type $B$ with respect to $\{w_\ell\}_{\ell \geq 1}$. 

We now show that $T$ is not of Type $A$ with respect to $\{w_\ell\}_{\ell}$. The key point is that $T$ has only one-dimensional eigenspaces, so 
$$\sum_{n=1}^{\infty} \sum_{k=0}^{n-1} \lambda_{n,k}(e_{n,k} \otimes \overline{e_{n,k}} )= \sum_{n=1}^{\infty} \frac{1}{n^3} \sum_{k=0}^{n-1} \left(1+\frac{k}{n^p}\right) (e_{n,k} \otimes \overline{e_{n,k}} )$$ 
is the unique decomposition of $T$ as a sum of rank one projections generated by orthogonal eigenfunctions of $T$.  Note again that $\vertiii{ e_{n,k} }=\sqrt{n},$ and 
\[ \lambda_{n,k} \vertiii{e_{n,k}}=\frac{1}{n^3} \left( 1+\frac{k}{n^p} \right) \cdot \sqrt{n} < \infty. \]
However, 
\begin{eqnarray*}
\sum_{n = 1}^\infty \sum_{k=0}^{n-1} \lambda_{n,k} \vertiii{e_{n,k}}^2 &=& \sum_{n = 1}^\infty \frac{1}{n^2} \sum_{k=0}^{n-1} \left(1+\frac{k}{ n^p} \right) \\
&=& \sum_{n = 1}^\infty  \frac{1}{n^2}  \left( n+ \frac{n(n-1)}{2n^p} \right)\\
&\geq & \sum_{n = 1}^\infty \frac{1}{n}=\infty.
\end{eqnarray*} 
\end{proof}

We can modify the counterexample in Proposition~\ref{proposition1} to deal with the case of a real Hilbert space $\BH$. This amounts to using a real-valued ONB for $\BR^n$ instead of the Fourier ONB $\{e_{n,k}\}_{k=0}^{n-1}$. 
For this let $\{h_{n, k}\}_{k=0}^{n-1}$ denote the  Hartley ONB basis for $\BR^n$ (see \cite{wic03}), where
$$h_{n,k}=\frac{1}{\sqrt{n}} \left( \cos \left(\frac{2 \pi k l}{n}\right)+ \sin\left(\frac{2 \pi k l}{n}\right) \right)_{l=0}^{n-1}=\sqrt{\frac{2}{n}}\left( \cos\left( \frac{2 \pi k l}{n}- \frac{\pi}{4}\right)\right)_{l=0}^{n-1}.$$
Thus
\[ \sum_{k=0}^{n-1} h_{n,k}\otimes \overline{h_{n,k}} = \sum_{k=0}^{n-1} h_{n,k}\otimes h_{n,k} = I_n, \]
where $I_n$ denotes the identity of order $n$ in $\BR^n$.

\begin{lem}\label{lem1}
For a fixed $n \geq 1$ and each $0 \leq k \leq n-1$ we have
\begin{equation}\label{eq11b}
\sqrt{\frac{n}{2}} \leq \vertiii{h_{n,k}}=\frac{1}{\sqrt{n}}\sum_{l=0}^{n-1} \left| \cos \left(\frac{2 \pi k l}{n}\right)+ \sin\left(\frac{2 \pi k l}{n}\right)  \right| \leq \sqrt{n}.
\end{equation}
\end{lem}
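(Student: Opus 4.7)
The plan is to treat both inequalities as two different consequences of the fact that $h_{n,k}$ is a unit vector with a uniform $\ell^\infty$-bound on its coordinates. Set
\[ c_l := \cos\!\left(\tfrac{2\pi k l}{n}\right) + \sin\!\left(\tfrac{2\pi k l}{n}\right) = \sqrt{2}\,\cos\!\left(\tfrac{2\pi k l}{n} - \tfrac{\pi}{4}\right), \]
so that $\vertiii{h_{n,k}} = \tfrac{1}{\sqrt{n}} \sum_{l=0}^{n-1} |c_l|$ and, from the rewritten form, $|c_l| \le \sqrt{2}$ for every $l$. The two essential auxiliary facts I will need are the pointwise bound $|c_l|\leq\sqrt 2$ and the identity $\sum_{l=0}^{n-1} c_l^2 = n$.

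To establish the identity, I would expand $c_l^2 = 1 + \sin(4\pi k l/n)$ and then observe that $\sum_{l=0}^{n-1} \sin(4\pi k l /n) = 0$: either $2k/n$ is an integer (and every term is zero), or it is not and the sum is the imaginary part of a geometric sum of nontrivial $n$-th roots of unity, which vanishes. This confirms $\sum_l c_l^2 = n$, equivalently $\|h_{n,k}\|_2 = 1$, as expected for an ONB.

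For the \emph{upper bound}, I apply Cauchy--Schwarz directly:
\[ \sum_{l=0}^{n-1}|c_l| \;\le\; \sqrt{n}\,\Bigl(\sum_{l=0}^{n-1} c_l^2\Bigr)^{1/2} = \sqrt{n}\cdot\sqrt{n} = n, \]
so $\vertiii{h_{n,k}} \le \sqrt{n}$. For the \emph{lower bound}, I use the $\ell^\infty$-bound to trade squares for absolute values: since $|c_l|\le \sqrt 2$, we have $c_l^2 \le \sqrt 2\,|c_l|$ termwise, hence
\[ n = \sum_{l=0}^{n-1} c_l^2 \;\le\; \sqrt{2}\sum_{l=0}^{n-1}|c_l|, \]
giving $\sum_l |c_l| \ge n/\sqrt{2}$ and therefore $\vertiii{h_{n,k}} \ge \sqrt{n/2}$.

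The only step that requires any care is the identity $\sum_l c_l^2 = n$, but this is essentially the statement that $h_{n,k}$ is a unit vector in $\mathbb{R}^n$, which is built into the Hartley ONB; the remainder of the argument is a routine combination of Cauchy--Schwarz with the sharp pointwise bound $|c_l|\le\sqrt 2$. I expect no real obstacle beyond making sure the constants $\sqrt 2$ propagate correctly in the two directions.
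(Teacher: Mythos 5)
Your proof is correct, but it follows a genuinely different route from the paper's. The paper argues by symmetry of the angle set: it writes the sum over $S_n=\{2\pi k/n: 0\le k\le n-1\}$, uses $S_n=-S_n \pmod{2\pi}$ to see that twice the sum equals $\sqrt{2}\sum_{k}\bigl(\,\bigl|\cos\bigl(\tfrac{2\pi k}{n}-\tfrac{\pi}{4}\bigr)\bigr|+\bigl|\sin\bigl(\tfrac{2\pi k}{n}-\tfrac{\pi}{4}\bigr)\bigr|\,\bigr)$, and then invokes the pointwise inequality $1\le|\sin x|+|\cos x|\le\sqrt{2}$. You instead exploit the exact normalization $\sum_{l=0}^{n-1}c_l^2=n$ (which you verify by the geometric-sum/roots-of-unity computation, and which is just $\|h_{n,k}\|_2=1$), getting the upper bound from Cauchy--Schwarz and the lower bound from the pointwise bound $|c_l|\le\sqrt{2}$ via $c_l^2\le\sqrt{2}\,|c_l|$; in effect an $\ell^1$--$\ell^2$--$\ell^\infty$ comparison. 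Both arguments give the same constants, but yours has a concrete advantage: it works uniformly in $k$ without any assumption on the orbit $\{2\pi k l/n \bmod 2\pi\}_{l}$, whereas the paper's identification of the lemma's sum with the sum over $S_n$ tacitly assumes that this multiset exhausts $S_n$, which holds only when $\gcd(k,n)=1$ (for instance $k=0$ gives the single angle $0$); your route sidesteps that issue entirely, at the modest price of the explicit computation of $\sum_l c_l^2$, which the paper's symmetry trick avoids.
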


\begin{proof}
Denote by $S_n$ the set
\begin{equation*}\label{eq8}
S_n:=\bigg\{\frac{2 \pi k}{n}: 0\leq k \leq n-1 \bigg\}.
\end{equation*}
It is easy to see that for each $0 \leq l \leq n-1$ we have
\begin{equation*}
S_n=\bigg\{ \frac{2 \pi k l}{n} \;( \mathrm{mod} \; 2\pi): 0\leq k \leq n-1  \bigg\}=\bigg\{ - \frac{2 \pi k}{n} \;( \mathrm{mod} \; 2\pi): 0\leq k \leq n-1  \bigg\}.
\end{equation*} 
Let $E:=\sum_{x \in S_n}\left|\cos x+\sin x \right|.$
Then
\begin{eqnarray}\label{eq12}
\nonumber 2E &=&\sum_{x \in S_n}\left|\cos x+\sin x \right|+\sum_{-x \in S_n}\left|\cos x+\sin x \right|\\ \nonumber
&=& \sqrt{2} \sum_{k=0}^{n-1} \left| \cos\left( \frac{2 \pi k}{n} - \frac{\pi}{4} \right) \right| + \sqrt{2} \sum_{k=0}^{n-1} \left| \cos\left( \frac{2 \pi k}{n} + \frac{\pi}{4} \right) \right|\\ 
&=& \sqrt{2} \sum_{k=0}^{n-1} \left[ \left| \cos\left( \frac{2 \pi k}{n} - \frac{\pi}{4} \right) \right| +  \left| \sin\left( \frac{2 \pi k}{n} - \frac{\pi}{4} \right) \right| \right].
\end{eqnarray}
Now for each $x \in \BR$,
\begin{eqnarray*}
&& (|\sin x|+ |\cos x|)^2 =|\sin x|^2+ |\cos x|^2+2 |\sin x \cos x|=1+|\sin 2x| \geq 1,\\
&& \Rightarrow \sqrt{2} \geq |\sin x|+ |\cos x| \geq 1.
\end{eqnarray*}
It follows from~\eqref{eq12} that  $n\geq E\geq \frac{n}{\sqrt{2}}$ and therefore (\ref{eq11b}). 
\end{proof}

\begin{prop}\label{proposition2} Let $\BH=\ell^2(\{1,2,...\})$, and choose $p > 1$.  Let $\{w_\ell\}_{\ell=1}^{\infty}$ denote  the standard orthonormal basis of $\BH$, i.e., $w_{\ell}=\delta_{\ell}$.
For each $n\geq 1$  let $T_n$ denote the $n\times n$ matrix given by
\[ T_n= \frac{1}{n^3}\sum_{k=0}^{n-1} \left(1+\frac{k}{n^p}\right) (h_{n,k} \otimes h_{n,k}) \in \BR^{n \times n}. \] 
We define an  infinite block-diagonal matrix $T$ by   
$$T=T_1 \oplus T_2 \oplus ... \oplus T_n \oplus...$$ 
Then, $T$ is a positive semi-definite trace-class operator of Type $B$ but not of Type $A$ with respect to the orthonormal basis $\{w_\ell\}_{\ell \geq 1}$. 

\end{prop}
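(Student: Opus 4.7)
The plan is to run the argument of Proposition~\ref{proposition1} with the Hartley ONB $\{h_{n,k}\}$ in place of the Fourier ONB, invoking Lemma~\ref{lem1} at the two places where the identity $\vertiii{e_{n,k}} = \sqrt{n}$ was used. That $T$ is a positive semi-definite trace-class operator follows from the same computation as in Proposition~\ref{proposition1}, since the eigenvalues $\lambda_{n,k} = n^{-3}(1 + k n^{-p})$ and the unit norm of the Hartley eigenvectors $h_{n,k}$ are unchanged.

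For the Type $B$ decomposition I use the completeness identity $\sum_{k=0}^{n-1} h_{n,k} \otimes h_{n,k} = I_n$ stated just before Lemma~\ref{lem1} to split
\[ T_n = \frac{1}{n^3} I_n + \frac{1}{n^{3+p}} \sum_{k=0}^{n-1} k\,(h_{n,k} \otimes h_{n,k}), \]
and then expand the identity on block $n$ as $\sum_\ell w_\ell \otimes w_\ell$. The resulting Type $B$ sum becomes
\[ \sum_{n \geq 1} \Big( \sum_{\ell \in \mathrm{block}\,n} \frac{\vertiii{w_\ell}^2}{n^3} + \sum_{k=0}^{n-1} \frac{k\,\vertiii{h_{n,k}}^2}{n^{3+p}} \Big) \leq \sum_{n \geq 1} \Big( \frac{1}{n^2} + \frac{n-1}{2\,n^{1+p}} \Big) < \infty, \]
where I have used $\vertiii{w_\ell} = 1$ together with the upper bound $\vertiii{h_{n,k}}^2 \leq n$ supplied by Lemma~\ref{lem1}.

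To show $T$ is not of Type $A$, I note that within each block the eigenvalues $\lambda_{n,0} < \lambda_{n,1} < \cdots < \lambda_{n,n-1}$ are strictly increasing, so every intra-block eigenspace is the one-dimensional line $\mathrm{span}(h_{n,k})$. If eigenvalues coincide across two different blocks, the two eigenvectors involved have disjoint coordinate support in $\{w_\ell\}$, so the elementary identity
\[ \vertiii{a h + b h'}^2 + \vertiii{-b h + a h'}^2 = \vertiii{h}^2 + \vertiii{h'}^2 + 4|ab|\,\vertiii{h}\,\vertiii{h'} \geq \vertiii{h}^2 + \vertiii{h'}^2 \]
shows that the standard decomposition $\{h_{n,k}\}$ minimises $\sum_{n,k} \lambda_{n,k}\vertiii{g_{n,k}}^2$ over all orthogonal eigendecompositions. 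Applying the lower bound $\vertiii{h_{n,k}}^2 \geq n/2$ from Lemma~\ref{lem1} then yields
\[ \sum_{n,k} \lambda_{n,k} \vertiii{h_{n,k}}^2 \geq \sum_{n \geq 1} \frac{n}{2} \sum_{k=0}^{n-1} \frac{1}{n^3} = \sum_{n \geq 1} \frac{1}{2n} = \infty. \]
The main point distinguishing this argument from Proposition~\ref{proposition1} is the use of Lemma~\ref{lem1} in both directions: the upper bound powers the Type $B$ construction, while the lower bound forces Type $A$ to fail. The possible cross-block coincidence of eigenvalues, which did not arise so transparently in the complex Fourier setting, is the one small bookkeeping point requiring the extra rotation argument above.
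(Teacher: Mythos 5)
Your argument is correct and is essentially the paper's: the published proof of Proposition~\ref{proposition2} consists of one sentence saying to rerun Proposition~\ref{proposition1} with the Hartley vectors $h_{n,k}$ in place of the Fourier vectors $e_{n,k}$, replacing the identity $\vertiii{e_{n,k}}=\sqrt{n}$ by the two-sided bound $\sqrt{n/2}\leq\vertiii{h_{n,k}}\leq\sqrt{n}$ of Lemma~\ref{lem1} --- exactly the substitution you make, with the upper bound driving the Type $B$ estimate and the lower bound defeating Type $A$. The one place where you go beyond the paper is the failure-of-Type-$A$ step: Proposition~\ref{proposition1} asserts that $T$ has only one-dimensional eigenspaces, so that the eigendecomposition into rank-one terms is unique, and the quoted proof of Proposition~\ref{proposition2} inherits that assertion; since the eigenvalue ranges $[n^{-3},\,n^{-3}(1+(n-1)n^{-p})]$ of different blocks can overlap when $p$ is close to $1$ and $n\geq 4$, a cross-block coincidence of eigenvalues is not self-evidently excluded for every $p>1$. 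Your rotation inequality for disjointly supported eigenvectors (valid since $a^2+b^2=1$, and extendable to any finite multiplicity by the same computation with an orthogonal matrix) shows that any eigendecomposition dominates the block-diagonal one, which closes this small gap cleanly. So: same approach, with a worthwhile extra verification at the only delicate point.
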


\begin{proof}
The proof is almost identical to that of Proposition~\ref{proposition1} where the Fourier ONB vectors $e_{n,k}$
are replaced by the Hartley ONB vectors $h_{n,k}$ and the estimate $\vertiii{e_{n,k}}=\sqrt{n}$ 
is replaced by $\sqrt{\frac{n}{2}}\leq \vertiii{h_{n,k}} \leq \sqrt{n}$,  cf. Lemma \ref{lem1}.
\end{proof}

We can now give an answer to  Feichtinger's question, i.e., Problem~\ref{problem1}.

\begin{thm}\label{mainresult} Suppose that $\{w_n\}_{n \geq 1}$ is a Wilson orthonormal basis for $L^2(\BR)$ with $g \in M^1(\BR)$. Let $p>1$, and for each $n\geq 1$ set $\lambda_{n,k}=\frac{1}{n^3}(1+\frac{k}{n^p}).$

For fixed $n \geq 1$ and each $0 \leq k \leq n-1$, let  $h_{n,k} \in L^{2}(\BR)$ where 
\[ h_{n,k}=\frac{1}{\sqrt{n}} \sum_{l=0}^{n-1} \left( \cos \left( \frac{2 \pi k l}{n} \right)+ \sin\left( \frac{2 \pi k l}{n} \right) \right) w_{\frac{n(n-1)}{2}+l+1}. \] 
Let $T$ be the operator defined by  $$T=\sum_{n=1}^{\infty}\sum_{k=0}^{n-1}\lambda_{n,k}h_{n,k}\otimes h_{n,k}.$$ The following statements hold:

\begin{enumerate}
\item[(i)] $\{h_{n,k}: 0 \leq k \leq {n-1}, n\geq 1 \}$ is an orthonormal basis for $L^2(\BR)$. 
\item[(ii)]  $T$ is a positive semi-definite trace-class operator on $L^2(\BR)$ that provides a counter-example to Problem~\ref{problem1}. 
\end{enumerate}
\end{thm}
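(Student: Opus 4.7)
The plan is to transplant the counterexample of Proposition \ref{proposition2} from the sequence space $\ell^2(\BN)$ to $L^2(\BR)$ through the Wilson basis $\{w_n\}_{n\geq 1}$. The map $\sum_\ell c_\ell\,\delta_\ell \mapsto \sum_\ell c_\ell\,w_\ell$ is a unitary $\ell^2(\{1,2,\ldots\})\to L^2(\BR)$, and by the Wilson-basis characterization \cite[Theorem 8.5.1]{gro01} recalled in Section \ref{subsec1.1}, it also identifies the sequence norm $\vertiii{\cdot}$ with an equivalent of the $M^1(\BR)$-norm on $L^2(\BR)$. The strategy is to run the block-diagonal Hartley construction of Proposition \ref{proposition2} inside consecutive blocks of Wilson-basis indices.

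For part (i), each $h_{n,k}$ is supported on the Wilson indices $B_n=\{\frac{n(n-1)}{2}+1,\ldots,\frac{n(n-1)}{2}+n\}$, and the blocks $\{B_n\}_{n\geq 1}$ partition $\{1,2,\ldots\}$. Within each block, the change of basis from $\{w_m\}_{m\in B_n}$ to $\{h_{n,k}\}_{k=0}^{n-1}$ is coordinatewise the Hartley matrix, which is orthogonal on $\BR^n$. Combining block-wise orthogonality with the disjointness of supports across blocks yields an ONB of $L^2(\BR)$.

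For part (ii), I would first verify the hypotheses of Problem \ref{fei-problem}. Positive semi-definiteness and the trace estimate $\sum_{n,k}\lambda_{n,k}<\infty$ are immediate from $\lambda_{n,k}\geq 0$ and the same computation as in Proposition \ref{proposition2} (valid for $p>1$), together with the fact from (i) that $\{h_{n,k}\}$ is an orthonormal basis of eigenvectors. To place the kernel in $M^1(\BR^2)$, I would expand $T$ in the Type $B$ decomposition already used in Proposition \ref{proposition2}:
$$ T \;=\; \sum_{n\geq 1}\frac{1}{n^3}\sum_{m\in B_n} w_m\otimes\overline{w_m} \;+\; \sum_{n\geq 1}\sum_{k=0}^{n-1}\frac{k}{n^{3+p}}\, h_{n,k}\otimes\overline{h_{n,k}}. $$
Passing to kernels and using that $\|f\otimes\overline{g}\|_{M^1(\BR^2)}\asymp \|f\|_{M^1(\BR)}\|g\|_{M^1(\BR)}$ (which follows from the Wilson-basis characterization, since $\{W_{m,n}\}$ is an unconditional basis of $M^1(\BR^2)$), together with $\|h_{n,k}\|_{M^1(\BR)}\asymp \vertiii{h_{n,k}}\leq \sqrt{n}$ from Lemma \ref{lem1}, the first sum has $M^1(\BR^2)$-norm bounded by a constant times $\sum_n n^{-2}$, while the second is bounded by a constant times $\sum_n n^{-p}$. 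Both converge, so $k\in M^1(\BR^2)$.

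To close the counterexample, using $\|h_{n,k}\|_{M^1(\BR)}\asymp \vertiii{h_{n,k}}$ and the lower bound $\vertiii{h_{n,k}}\geq\sqrt{n/2}$ of Lemma \ref{lem1}, I would estimate
$$ \sum_{n\geq 1}\sum_{k=0}^{n-1}\lambda_{n,k}\,\|h_{n,k}\|_{M^1(\BR)}^2 \;\gtrsim\; \sum_{n\geq 1}\frac{n}{2}\sum_{k=0}^{n-1}\frac{1}{n^3}\Bigl(1+\frac{k}{n^p}\Bigr)\;\geq\;\sum_{n\geq 1}\frac{1}{2n}\;=\;\infty, $$
which is exactly the divergent estimate from Proposition \ref{proposition2}. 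The conceptual content is therefore identical to the abstract setting; the main obstacle is the bookkeeping needed to lift each step through the Wilson basis, in particular to confirm that the resulting kernel actually lies in the Feichtinger algebra, so that the hypotheses of Problem \ref{fei-problem} are genuinely satisfied.
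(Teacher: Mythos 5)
Your proposal is correct and follows essentially the same route as the paper: it lifts the Hartley block construction of Proposition~\ref{proposition2} into $L^2(\BR)$ via the Wilson basis, uses Lemma~\ref{lem1} for $\sqrt{n/2}\le \vertiii{h_{n,k}}\le\sqrt{n}$, and concludes with the same trace-class and divergence estimates. Your one genuine addition is the explicit check that the kernel lies in $M^1(\BR^2)$, via the Type $B$ splitting and $\|f\otimes\overline{g}\|_{M^1(\BR^2)}\asymp\|f\|_{M^1}\|g\|_{M^1}$; this hypothesis is needed for the original Problem~\ref{fei-problem} (and for $T$ to have the form~\eqref{eq02} in Problem~\ref{problem1}), is left implicit in the paper's written proof of the theorem, and your estimates $\sum_n n^{-2}+\sum_n n^{-p}<\infty$ carry it out correctly.
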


\begin{proof}
(i) It is easy to see that  for each $n\geq 1,$ $\{h_{n,k}\}_{k=0}^{n-1}$ is  an orthogonal set in $L^2(\BR)$. Indeed, $\langle h_{n,k}, h_{n',k'} \rangle =0$, for $n \neq n'$. Furthermore, since $\langle w_n, w_m \rangle=\delta_{n,m}$ we have that $\|h_{n,k}\|=1$ for all $n\geq 1,$ and $k\in \{0, 1, \hdots, n-1\}$. \\
(ii)  It is also easy to see that $T$ is a well-defined operator on $L^2(\BR)$. In fact, the series defining $T$ converges in the operator norm. Furthermore, since $\|h_{n,k}\otimes h_{n,k}\|_{\mathcal{I}_{1}}=1$, it follows that 
\begin{align*}
\|T\|_{\mathcal{I}_{1}}&=\sum_{n=1}^\infty \sum_{k=0}^{n-1} \lambda_{n,k} \\
&= \sum_{n=1}^\infty \frac{1}{n^3} \sum_{k=0}^{n-1}(1+\frac{k}{n^p}) \\
&= \sum_{n=1}^\infty \frac{1}{n^3} \left( n+ \frac{n(n-1)}{2n^p}\right)< \infty. 
\end{align*} 
Consequently, $T$ is a  trace-class operator.

By Lemma \ref{lem1},
\begin{eqnarray*}
\Vert h_{n,k} \Vert_{M^1} &=& \sum_{m=1}^\infty |\langle h_{n,k}, w_m \rangle| \\
&=& \frac{1}{\sqrt{n}}\sum_{m=1}^\infty \left| \Bigg\langle \sum_{l=0}^{n-1} \left( \cos\left( \frac{2 \pi k l}{n} \right)+ \sin\left( \frac{2 \pi k l}{n} \right) \right) w_{\frac{n(n-1)}{2}+l}, w_m \Bigg\rangle \right| \\
&=& \frac{1}{\sqrt{n}} \sum_{l=0}^{n-1} \left| \cos\left( \frac{2 \pi k l}{n} \right)+ \sin\left( \frac{2 \pi k l}{n} \right) \right| \\
&\geq & \sqrt{\frac{n}{2}}. 
\end{eqnarray*}

Also each term
\begin{eqnarray*}
\lambda_{n,k }\Vert h_{n,k} \Vert_{M^1} &=& \frac{1}{n^3}(1+\frac{k}{n^p}) \cdot \frac{1}{\sqrt{n}} \sum_{l=0}^{n-1} \left| \cos\left( \frac{2 \pi k l}{n} \right)+ \sin\left( \frac{2 \pi k l}{n} \right) \right|\\
& \leq & \frac{1}{n^3}(1+\frac{k}{n^p}) \cdot \sqrt{n}  < \infty.
\end{eqnarray*}
However, 
\begin{eqnarray*}
\sum_{n=1}^\infty \sum_{k=0}^{n-1} \lambda_{n,k }\Vert h_{n,k} \Vert_{M^1}^2 &\geq & \sum_{n=1}^\infty \frac{1}{2n^2} \sum_{k=0}^{n-1}(1+\frac{k}{n^p}) \\
&=& \sum_{n=1}^\infty \frac{1}{2n^2} \left( n+ \frac{n(n-1)}{2n^p} \right)\\
&\geq & \sum_{n = 1}^\infty \frac{1}{2n}=\infty.
\end{eqnarray*}

\end{proof}

\section*{Acknowledgments} R.~Balan and K.~A.~Okoudjou were partially supported by ARO grant W911NF1610008. R.~Balan was also partially supported by the NSF grant DMS-1413249 and the LTS grant H9823013D00560049. K.~A.~Okoudjou  was also partially supported by a grant from the Simons Foundation $\# 319197$. This material is partially based upon work supported by the National Science Foundation under Grant No.~DMS-1440140 while K.~A.~Okoudjou was in residence at the Mathematical Sciences Research Institute in Berkeley, California, during the Spring 2017 semester.
A.~Poria is grateful to the United States-India Educational Foundation for providing the Fulbright-Nehru Doctoral Research Fellowship, and to the Department of Mathematics, University of Maryland, College Park, USA for the support provided during the period of this work. He would also like to express his gratitude to the Norbert Wiener Center for Harmonic Analysis and Applications at the University of Maryland, College Park for its kind hospitality, and the Indian Institute of Technology Guwahati, India for its support.

\bibliographystyle{plain}

\end{document}